\newcommand{\tim}[1]{\textcolor{orange}{#1}}
\title{Non-identifiability distinguishes Neural Networks among Parametric Models}
\author{Sourav Chatterjee, Timothy Sudijono\footnote{Department of Statistics, Stanford University} } 
\date{\today}
\begin{document}
\maketitle

\begin{abstract}
One of the enduring problems surrounding neural networks is to identify the factors that differentiate them from traditional statistical models. We prove a pair of results which distinguish feedforward neural networks among parametric models at the population level, for regression tasks. Firstly, we prove that for any pair of random variables $(X,Y)$, neural networks always learn a nontrivial relationship between $X$ and $Y$, if one exists. Secondly, we prove that for reasonable smooth parametric models, under local and global identifiability conditions, there exists a nontrivial $(X,Y)$ pair for which the parametric model learns the constant predictor $\E[Y]$. Together, our results suggest that a lack of identifiability distinguishes neural networks among the class of smooth parametric models.
\end{abstract}

\section{Introduction}

Consider a data distribution modeled by a random vector $X \in \bR^p$ and  a random variable $Y \in \bR$, representing the outcome. Suppose that there is a learnable relationship between the two random variables, which we encode by the condition 
\[
\E[\Var[Y | X]] < \Var[Y].
\]
Firstly, we show that in such settings, feedforward neural networks will always capture some of the variation. The first result of the paper proves that at the population level, the mean-squared error (MSE) of the best fitting neural network $\hat{f}$ is always less than $\Var(Y)$. The result is proved under minimal assumptions on the architecture of the neural network. 

Secondly, we provide a partial converse to our first result. This result shows that for most reasonable smooth parametric models, there exist distributions $(X,Y)$ such that the model learns nothing useful at all: the best-fitting model is the constant prediction $\E[Y].$ The differentiating conditions assumed in the converse result are what we call local identifiability and strong identifiability. The former condition requires the statistical model to have an invertible Fisher information matrix, as defined in Theorem \ref{thm:population_nfl}; the latter requires the model parameters to be close if the learned functions are close in $\bL^2$ norm.

Together, our results suggest that the lack of identifiability is a differentiating property for neural networks among the class of smooth parametric statistical models, in the sense of being able to learn a weak relationship in any nontrivial data distribution. This is surprising, as the statistics literature has traditionally focused on identifiability as a desirable property. 

To illustrate our results, we contrast the logistic regression model 
\begin{equation}
\label{eq:logistic}
f^{\sf{log}}_{\theta}(x) = \frac{e^{\alpha + \beta x}}{1+e^{\alpha + \beta x}}, \quad \theta = \left(\alpha, \beta \right) \in \bR^2
\end{equation}
with the one-layer neural network model 
\begin{equation}
\label{eq:nn}
f^{\sf{NN}}_\theta(x) = \gamma + \delta \frac{e^{\alpha + \beta x}}{1+e^{\alpha+\beta x}}, \quad \theta = \left(\alpha, \beta, \gamma, \delta \right) \in \bR^4
\end{equation}
The difference between the models is the presence of the two parameters $\gamma,\delta$. In this setting, Theorem \ref{thm:population_nfl} demonstrates the existence of a data distribution $(X,Y)$, for which the best-fitting logistic regression model in square loss is simply $\E[Y]$. Concretely, the function $f^{\log}_\theta$ of the form in Eq. \eqref{eq:logistic} which minimizes
\[
\E\left[(Y - f^{\sf{log}}_\theta(X))^2 \right]
\]
is given by the parameters $\alpha = \text{logit}(\E[X]), \beta = 0$. Meanwhile, Theorem \ref{thm:neural_networks_learn_everything} shows there exists some choice of parameters $\tilde{\theta}$ in the one-layer neural network model, yielding a function $f^{\sf{NN}}_{\tilde{\theta}}(x)$ which satisfies 
\[
\E\left[(Y - f^{\sf{NN}}_{\tilde{\theta}}(X))^2 \right] < \Var(Y) .
\]
We state this precisely in the following proposition when $X \sim \textsf{MVN}(0,I_p)$, a $p$-dimensional vector of i.i.d. Gaussians. It follows as a direct consequence of Theorem \ref{thm:neural_networks_learn_everything} and Corollary~\ref{cor:logistic_regression}. Section \ref{sec:discussion} discusses the differences between these two models in detail.

\begin{proposition}[Logistic model vs.~neural network model]
Let $X \sim \textup{\textsf{MVN}}(0,I_p)$. Then there exists a random variable $Y$ such that the best fitting logistic model minimizing square loss is the constant prediction $\E[Y]$. That is,
\[
\inf_\theta \E\left[(Y - f^{\sf{log}}_\theta(X))^2 \right] = \Var(Y).
\]
Explicitly, $Y$ is a random variable such that $\mathbb{E}(Y=1|X=x) = \frac{1}{2}+\epsilon g_0(x)$ for sufficiently small $\epsilon$, where $g_0$ is a bounded even function. On the other hand, for any random variable $Y'$ such that $\E[\Var(Y'|X)] < \Var(Y')$, we have %there exists a choice of parameters $\theta = \left(\alpha, \beta, \gamma, \delta \right)$ such that
\[
\inf_\theta \E\left[(Y' - f^{\sf{NN}}_\theta(X))^2 \right] < \Var(Y').
\]
\end{proposition}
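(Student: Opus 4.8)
The plan is to read the proposition off from the two main results, handling the logistic and neural-network assertions separately. For the logistic assertion I would apply Corollary~\ref{cor:logistic_regression}, which specializes Theorem~\ref{thm:population_nfl} to the family \eqref{eq:logistic}; the substance there is the verification of the two identifiability hypotheses. Local identifiability reduces to checking that the Fisher information matrix, which for the logistic family takes the familiar form $\E[f^{\sf{log}}_\theta(X)(1 - f^{\sf{log}}_\theta(X))\tilde X \tilde X^\top]$ with $\tilde X = (1, X^\top)^\top$, is invertible; this holds whenever the law of $X$ places no mass on a hyperplane, which is certainly true for $X \sim \textsf{MVN}(0, I_p)$. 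Strong identifiability asks that $\bL^2$-closeness of two sigmoids force closeness of their parameters, which I would obtain from the analyticity and strict monotonicity of the logistic link. With both conditions verified, Theorem~\ref{thm:population_nfl} yields the required $Y$.

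To exhibit the explicit form $\E[Y \mid X = x] = \frac{1}{2} + \epsilon g_0(x)$, I would trace the construction inside Theorem~\ref{thm:population_nfl}. The constant predictor $\E[Y] = \frac{1}{2}$ is attained in the model at $\beta = 0$, $\alpha = \text{logit}(\E[Y])$, where the tangent space is spanned by $\partial_\alpha f^{\sf{log}}_\theta \propto 1$ and $\partial_\beta f^{\sf{log}}_\theta \propto x$. The first-order stationarity condition for the square-loss risk at this point collapses to $\E[(Y - \E[Y])X] = 0$, i.e.\ $Y$ must be uncorrelated with $X$. Choosing the regression function to be an even perturbation $\frac{1}{2} + \epsilon g_0$ with $g_0$ bounded and even makes $x \mapsto x\,g_0(x)$ odd, so $\E[X g_0(X)] = 0$ by the symmetry of $X$ and stationarity holds automatically. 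Strong identifiability, together with the smallness of $\epsilon$, then promotes this stationary point to the global minimizer over the logistic family, giving $\inf_\theta \E[(Y - f^{\sf{log}}_\theta(X))^2] = \Var(Y)$.

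For the neural-network assertion, the hypothesis $\E[\Var(Y' \mid X)] < \Var(Y')$ is precisely the learnability condition of Theorem~\ref{thm:neural_networks_learn_everything}, so once I confirm that the one-layer model \eqref{eq:nn} meets that theorem's minimal architectural assumptions, the strict inequality $\inf_\theta \E[(Y' - f^{\sf{NN}}_\theta(X))^2] < \Var(Y')$ follows immediately. The structural explanation, and the whole point of the contrast, is that the output parameters $\gamma, \delta$ make the constant predictor a non-isolated point of the model class: at $\delta = 0$ the inner parameters $\alpha, \beta$ are unconstrained, so the model can always be perturbed off the constant in a risk-decreasing direction.

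The main obstacle is not located in this proposition, whose content is the juxtaposition of the two cited results, but in the hypotheses they require. Concretely, the delicate steps are establishing strong identifiability for the logistic family in Corollary~\ref{cor:logistic_regression}, turning $\bL^2$ proximity of sigmoids into uniform parameter proximity, and checking that \eqref{eq:nn} satisfies the architectural assumptions of Theorem~\ref{thm:neural_networks_learn_everything}.
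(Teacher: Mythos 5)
Your proposal is correct and follows the same route as the paper, which obtains this proposition directly as a juxtaposition of Theorem~\ref{thm:neural_networks_learn_everything} (applicable since $X\sim\textsf{MVN}(0,I_p)$ has a density and the one-layer sigmoid model is a tanh-form architecture) and Corollary~\ref{cor:logistic_regression}, with the explicit form $\E[Y|X]=\tfrac12+\epsilon g_0(X)$ coming, exactly as you describe, from choosing the perturbation $h$ orthogonal to the span of $\{1,X\}$ via evenness. Two minor cautions: the paper's $I(\theta_0)$ is $\E[\nabla f_{\theta_0}\nabla f_{\theta_0}^\top]$, whose logistic entries involve $(\sigma')^2=(f(1-f))^2$ rather than the Bernoulli-likelihood form $f(1-f)$ you quote (immaterial for invertibility at $\theta_0=0$), and strong identifiability cannot follow from analyticity and monotonicity of the link alone --- it is a property of the pair (model, law of $X$), and the paper's verification genuinely uses Gaussian anti-concentration via Markov's inequality.
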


\section{Results}

\subsection{Setup} We consider feedforward neural network architectures with $D+2$ layers, where the last layer is the output, and the first is the input layer. Denote the widths of each intermediate layer (i.e., all layers except the output layer) as $w_0,\dots,w_D$. The input layer takes in data in $\bR^{w_0}$, and the output layer is a single real number. The connection weights and biases for $d$th intermediate layer will be denoted by $W^{(d)} \in \bR^{w_d \times w_{d-1}}$ and $b^{(d)} \in \bR^{w_d}, 1 \leq d \leq D$. We will let $W^{(d)}_j$ denote the $j$th row of $W^{(d)}$, and $b_j^{(d)}$ denote the $j$th entry. Throughout the paper we will focus on neural network architectures for regression tasks so that the output layer is linear. Denote the weight matrix of the last layer by $W^{(D+1)} \in \bR^{w_D}$ (now a row vector), and the bias vector with $b^{(D+1)} \in \bR.$

We will say that an activation function $\sigma$ is of \textit{tanh form} of it satisfies 
\[
\sigma(0) = 1/2, \ \ \lim_{x \rightarrow  -\infty} \sigma(x)  = 0, \ \ \lim_{x \rightarrow  \infty} \sigma(x)  = 1,
\]
and is monotonically increasing. The commonly used hyperbolic tangent, exponential linear unit, and sigmoid functions are of this type, after appropriate scaling and shifting. The even more commonly used ReLU function $\sigma(x) = \max(x,0)$ is not of this type; but our result covers the ReLU function too. Altogether, the feedforward network is the model 
\[
f_\theta(x) = W^{(D+1)}\sigma(\dots W^{(2)}\sigma(W^{(1)}x + b^{(1)})  +b^{(2)}  \dots) + b^{(D+1)},
\]
parametrized by its connection weights and biases $\theta = \set{W^{(i)},b^{(i)}}_{1\le i\le D+1}.$  The notation means that the activation functions act component-wise on their input vectors. Finally, call $H^{(d)}(x)$ the vector-valued function representing the output of the hidden units in layer $d$, where $x \in \bR^{w_0}$. These functions are defined inductively as 
\[
H^{(d)}(x) = \sigma\left(W^{(d)}H^{(d-1)}(x) + b^{(d)}\right),
\]
where the nonlinearity is again applied component wise.

\subsection{Neural Networks Always Weakly Learn}

Fix a probability space $(\Omega,\cl{F},\Prob)$ and a pair of random variables $(X,Y)$ defined on it, where $X: \Omega \rightarrow \bR^p, Y: \Omega \rightarrow \bR$ such that
\[
\E[\Var(Y | X)] < \Var(Y).
\]
%In other words, the regression function $\E[Y|X]$ is almost surely nonconstant. 
We will focus on a regression task where $Y$ is a general random variable in $\bR$ and the neural network is fitted with a linear output layer. Fix an architecture for the neural network, and let $\hat{f}$ denote the mean-square-optimal neural network with this fixed architecture. That is, let $\cl{F}$ denote the class of functions generated by the neural network as the learnable parameters vary. Define 
\[
\hat{f} := \argmin_{f \in \cl{F}} \E[(Y - f(X))^2],
\]
where the minimizer is chosen by some predetermined rule if there is more than one minimizer. 
Then, under minimal assumptions on the neural network architecture, the following theorem shows that $\E[(Y - \hat{f}(X))^2] < \Var(Y).$% In detail:

\begin{theorem}
\label{thm:neural_networks_learn_everything}
Consider any pair of random variables $(X,Y)$ where $X \in \bR^p$ and $Y \in \bR$, such that $Y$ is square-integrable and $\E[\Var(Y|X)] < \Var(Y).$ Suppose that $X$ has density with respect to Lebesgue measure. Given any regressor neural network architecture with $d \geq 1$ hidden layers, a linear output, and activation function of tanh form, let $\hat{f}$ be any mean-square optimal neural network with this architecture, chosen by some predetermined rule if there is more than one optimizer. Then
\[
\E[(\hat{f}(X) - Y)^2] < \Var(Y).
\]
If the nonlinearity is ReLU, then the same conclusion holds, but $X$ need not have a density, and we require $w_D \geq 2$.
\end{theorem}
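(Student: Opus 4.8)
The plan is to reduce the claim to a statement purely about $\bL^2$ approximation of the regression function, and then to exhibit a single explicit network in $\cl{F}$ that beats the constant predictor; since $\hat f$ minimizes the risk over $\cl F$, any such witness forces $\E[(\hat f(X)-Y)^2]<\Var(Y)$ immediately, so the tie-breaking rule is irrelevant. Write $m(x)=\E[Y\mid X=x]$ and $c=\E[Y]=\E[m(X)]$. Orthogonality of the residual $Y-m(X)$ to every function of $X$ gives the Pythagorean identity
\[
\E[(Y-f(X))^2]=\E[\Var(Y\mid X)]+\E[(m(X)-f(X))^2]
\]
for every square-integrable $f(X)$. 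Taking $f\equiv c$ shows the constant predictor has excess risk $\E[(m(X)-c)^2]=\Var(m(X))$, which is strictly positive precisely because $\E[\Var(Y\mid X)]<\Var(Y)$. Hence it suffices to produce $f\in\cl F$ with $\E[(m(X)-f(X))^2]<\Var(m(X))$, that is, a network that captures some of the variation in the mean-zero function $g:=m-c$.

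To this end I would look along a one-parameter family $f_\delta=c+\delta h$, where $h(x)=\sigma(w^\top x+b)$ is the output of a single hidden unit and $\delta$ is small. Expanding,
\[
\E[(m(X)-f_\delta(X))^2]=\Var(m(X))-2\delta\,\E[g(X)h(X)]+\delta^2\,\E[h(X)^2],
\]
so provided the correlation $\E[g(X)\sigma(w^\top X+b)]$ is nonzero for some $(w,b)$, choosing $\delta$ small with the correct sign makes the excess risk strictly below $\Var(m(X))$. It remains to check that $f_\delta$, or a function arbitrarily $\bL^2$-close to it, genuinely lies in $\cl F$. For a single hidden layer this is immediate: route one unit to compute $\sigma(w^\top x+b)$, zero out the remaining units so they contribute only constants, and use the linear output layer to form $c+\delta\sigma(w^\top x+b)$. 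For depth $D\ge 2$ I would propagate this signal by driving the later layers into their near-linear regime: feed $\sigma(w^\top x+b)$ forward through hidden units with small incoming weights $\eta$ and compensating biases, so each layer acts as an approximate affine map with slope $\sigma'(0)\eta$, and absorb the resulting small multiplicative factor into a large output weight. As $\eta\to 0$ the realized network converges boundedly, hence in $\bL^2$ by dominated convergence, to $c+\delta(\sigma(w^\top x+b)-\tfrac12)$, whose correlation with the mean-zero $g$ agrees with that of $\sigma(w^\top x+b)$; so for small $\eta$ the exact network in $\cl F$ still beats the constant.

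The crux is therefore to rule out the degenerate case in which $\E[g(X)\sigma(w^\top X+b)]=0$ for all $(w,b)$. This is exactly the assertion that the tanh-form activation is discriminatory for the finite signed measure $g\,dP_X$: vanishing of all these integrals should force $g\,dP_X=0$, hence $g=0$ almost surely, contradicting $\Var(m(X))>0$. This completeness step is where I expect the real difficulty, and where the hypotheses enter. For a bounded tanh-form $\sigma$ I would use that $X$ has a density $\rho$, so that $g\rho\in\bL^1$; sending the slope to infinity in $\sigma(\lambda(w^\top x+b)+\varphi)$ then converts the orthogonality relations into the vanishing of a family of integrals that a Fourier-transform argument, in the spirit of Cybenko's discriminatory lemma, shows can only occur if $g\rho\equiv 0$.

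For the ReLU activation the same scheme applies, and the two distinguishing features of the theorem surface here. Unboundedness removes the need for a density: with $w_D\ge 2$ I would take a difference of two last-layer ReLU units, $\tfrac1\epsilon[\max(w^\top x+b+\epsilon,0)-\max(w^\top x+b,0)]$, which converges to the half-space indicator $\mathbf{1}\{w^\top x+b>0\}$, so orthogonality of $g$ to all realizable directions would force every signed half-space integral of $g\,dP_X$ to vanish; injectivity of this Radon-type transform then yields $g=0$ for an arbitrary finite signed measure, with no absolute-continuity assumption. The requirement $w_D\ge 2$ is precisely what makes this difference-of-ReLUs step construction available in the last hidden layer, and the depth propagation goes through as before.
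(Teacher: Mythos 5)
Your proposal is correct and follows essentially the same route as the paper: reduce to the regression function via the Pythagorean identity, show that a nonzero $g=m-c$ must correlate with some half-space indicator (your Cybenko-style discriminatory/Fourier step is exactly the paper's Cram\'er--Wold lemma), realize an affine transform of that indicator inside $\cl F$ (sharp slopes for tanh form, where the density kills the boundary point; a difference of two last-layer ReLU units, whence $w_D\ge 2$, for ReLU), and perturb the constant predictor along that direction. The only differences are cosmetic --- you optimize a one-parameter family $c+\delta h$ where the paper writes down the best linear predictor in $\mathbf{1}_A$, and you propagate through depth via a near-linear regime where the paper iterates $\sigma$ --- so no further comparison is needed.
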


%The gap between the MSE of the best fitting neural network and the variance of $Y$ is directly proportional to the covariance between $Y$ and $\mathbf{1}_A$. \todo{It would be interesting to obtain a quantitative lower bound on the covariance. Can we do better by using multiple indicators $\mathbf{1}_{A_i}, i =1,\dots,k$? Also is $d$ the same as the dimensionality above?}.\\

This theorem is proved in Section \ref{proofsec}. The idea of the proof is to notice that the condition $\E[\Var(Y | X)] < \Var(Y)$ implies the existence of a set $S$ such that $Y$ and $\mathbf{1}_S(X)$ are correlated. In the regression setting, we show that $S$ can be taken as a half-space on $\bR^p$. The key lemma is the following, which is closely related to the Cramer--Wold theorem. 

\begin{lemma}
\label{lemma:correlated_indicator_regression_case}
Suppose $Y$ is square-integrable and $\E[Y|X] \neq \E[Y]$ on a set of nonzero measure. Then there exists an event of the form $\set{\alpha \cdot X < t}$
such that $\E[Y|X]$ has nonzero correlation with the indicator random variable $\mathbf{1}\set{\alpha \cdot X < t}.$
\end{lemma}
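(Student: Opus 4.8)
The plan is to argue by contradiction through a Cram\'er--Wold / Fourier uniqueness mechanism: knowing the behavior of a signed measure on every one-dimensional projection (equivalently, on every half-space) pins the measure down completely. Write $g(x) = \E[Y \mid X = x]$ and $c = \E[Y] = \E[g(X)]$, and set $h = g - c$, so that the hypothesis says $h(X) \neq 0$ with positive probability while $\E[h(X)] = 0$. Since $Y$ is square-integrable, Jensen's inequality gives $\E|g(X)| \le \E|Y| < \infty$, so $h(X)$ is integrable. Because the covariance of $\E[Y\mid X]$ with any indicator $\mathbf 1\set{\alpha\cdot X < t}$ equals $\E\!\left[h(X)\,\mathbf 1\set{\alpha\cdot X < t}\right]$, it suffices to produce $\alpha, t$ for which this quantity is nonzero.

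Suppose toward a contradiction that $\E\!\left[h(X)\,\mathbf 1\set{\alpha\cdot X < t}\right] = 0$ for every $\alpha \in \bR^p$ and $t \in \bR$. I would encode this as a finite signed measure $\nu$ on $\bR^p$ defined by $\nu(A) = \E[h(X)\mathbf 1_A(X)]$; integrability of $h(X)$ makes $\nu$ finite, and the contradiction hypothesis says $\nu$ vanishes on every half-space $\set{x : \alpha\cdot x < t}$. The goal then reduces to showing that a finite signed measure vanishing on all half-spaces must be the zero measure --- this is the heart of the argument and its main obstacle.

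To carry this out, fix $\alpha$ and consider the pushforward $\nu_\alpha := (T_\alpha)_*\nu$ under $T_\alpha(x) = \alpha\cdot x$, a finite signed measure on $\bR$. The vanishing assumption says $\nu_\alpha\big((-\infty, t)\big) = 0$ for all $t$; since a finite signed Borel measure on $\bR$ is determined by its distribution function, this forces $\nu_\alpha = 0$. Consequently $\int_{\bR^p} e^{i\alpha\cdot x}\, d\nu(x) = \int_{\bR} e^{is}\, d\nu_\alpha(s) = 0$ for every $\alpha$ (the case $\alpha = 0$ being $\nu(\bR^p) = \E[h(X)] = 0$), so the Fourier transform of $\nu$ vanishes identically. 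By uniqueness of the Fourier transform for finite signed measures, $\nu = 0$, which forces $h(X) = 0$ almost surely and contradicts the hypothesis.

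Finally, having located $\alpha, t$ with $\E\!\left[h(X)\mathbf 1\set{\alpha\cdot X < t}\right] \neq 0$, I would observe that a nonzero covariance forces both $\E[Y\mid X]$ and $\mathbf 1\set{\alpha\cdot X < t}$ to be non-degenerate, since a constant random variable has zero covariance with everything; hence the correlation is well-defined and nonzero, completing the proof. The only delicate points are the reduction to one dimension via pushforwards and the invocation of Fourier uniqueness --- the integrability bookkeeping is routine given square-integrability of $Y$, and notably no density assumption on $X$ is needed for this lemma.
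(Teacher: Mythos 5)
Your proof is correct, and it rests on the same Cram\'er--Wold / Fourier-uniqueness mechanism as the paper's: assume every half-space covariance vanishes, deduce that $\E[h(X)e^{i\alpha\cdot X}]=0$ for all $\alpha$, and conclude $h(X)=0$ almost surely. The difference is in how the two reduction steps are packaged. Where you pass from half-spaces to the full one-dimensional marginal by noting that the pushforward signed measure $\nu_\alpha$ has identically vanishing distribution function (a Jordan-decomposition plus $\pi$--$\lambda$ argument), the paper instead uses the functional monotone class theorem to extend the identity from the indicators $\mathbf{1}\set{\alpha\cdot X<t}$ to all bounded measurable functions of $\alpha\cdot X$. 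And where you invoke the uniqueness theorem for Fourier transforms of finite signed measures as a black box to get $\nu=0$, the paper essentially re-derives that fact: it multiplies the characteristic-function identity by $\hat g(\alpha)$ for $g$ with $g,\hat g\in\bL^1$, applies Fubini and Fourier inversion, and then approximates rectangle indicators by smooth bump functions before appealing to the $\pi$-system of rectangles. Your version is shorter and cleaner at the cost of citing a standard uniqueness theorem; the paper's is more self-contained. Both correctly avoid any density assumption on $X$, and your closing observation that a nonzero covariance automatically makes both variables non-degenerate (so the correlation is well-defined) is a point the paper leaves implicit.
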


The second step is to prove that neural networks can approximate half-space indicator functions, and affine transformations of these indicators. Taking the right linear combination $a\mathbf{1}_S(X) + b$ results in a predictor $\hat{f}$ with MSE better than that of the best constant prediction $\E[Y].$ Choosing the right constants $a,b$ is important, as otherwise there may not be an improvement in MSE. While regular GLMs may be able to fit $\mathbf{1}_S(X)$ arbitrarily well, neural networks can fit affine transformations of $\mathbf{1}_S(X)$. This is an important distinction.

\begin{lemma}[Approximation capabilities of neural networks]
\label{lemma:approximation_neural_networks}
Let $X$ be an $\bR^p$-valued random vector that has density with respect to Lebesgue measure. Fix parameters $\alpha, c_0,c_1,c_2,$ and consider any neural network with ReLU activation, $D \geq 1$ hidden layers, and a linear output layer with $w_D \geq 2,$ where recall that $w_D$ is the width of the last hidden layer of the network. Then there exist parameters $\theta$ such that $f_\theta(X)$ is arbitrarily close to the random variable $c_2\mathbf{1}\set{\alpha^\top X \leq c_1} + c_0$ in $\bL^2$-norm. The same result holds for tanh form activations, without the assumption that $w_D \geq 2.$
\end{lemma}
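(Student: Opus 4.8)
The plan is to construct, for each sharpness parameter $\lambda > 0$, an explicit choice of weights whose output approximates $c_2\mathbf{1}\set{\alpha^\top x \le c_1} + c_0$, and then send $\lambda \to \infty$. I may assume $\alpha \neq 0$, since otherwise the target is the constant $c_2\mathbf{1}\set{0 \le c_1} + c_0$, which the output bias realizes exactly. The conceptual point to keep in view is that the output layer is linear and unconstrained: once some hidden unit carries a good approximation of the half-space indicator, the output layer is free to apply the affine map $t \mapsto c_2 t + c_0$. It is precisely this final affine freedom (absent from a GLM link) that the lemma exploits, so the construction should isolate it cleanly.

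First I would handle the single–hidden–layer case $D = 1$. For tanh form $\sigma$, install one first-layer unit computing $h_\lambda(x) := \sigma\bigl(\lambda(c_1 - \alpha^\top x)\bigr)$, i.e. take $W^{(1)}_1 = -\lambda\alpha^\top$ and $b^{(1)}_1 = \lambda c_1$; by the defining limits of $\sigma$, as $\lambda \to \infty$ we get $h_\lambda(x) \to \mathbf{1}\set{\alpha^\top x < c_1}$ at every $x$ off the hyperplane $\set{\alpha^\top x = c_1}$. For ReLU I instead use two units to build the clipped ramp $g_\lambda(x) := \sigma\bigl(\lambda(c_1 - \alpha^\top x) + 1\bigr) - \sigma\bigl(\lambda(c_1 - \alpha^\top x)\bigr)$, which equals $1$ when $\alpha^\top x \le c_1$, equals $0$ when $\alpha^\top x \ge c_1 + 1/\lambda$, and interpolates linearly in between; this difference is taken by the linear output layer, which is exactly why $w_D \ge 2$ is required in the ReLU case. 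In either case the output $f_\theta = c_2 h_\lambda + c_0$ (respectively $c_2 g_\lambda + c_0$) is a legitimate network function.

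Next I would upgrade pointwise convergence to $\bL^2$ convergence. Because $X$ has a density and $\alpha \neq 0$, the boundary hyperplane $\set{\alpha^\top X = c_1}$ has probability zero, so $h_\lambda(X) \to \mathbf{1}\set{\alpha^\top X \le c_1}$ almost surely, and likewise for $g_\lambda(X)$. Since $0 \le h_\lambda, g_\lambda \le 1$ are uniformly bounded, the bounded convergence theorem yields convergence in $\bL^2$, hence $f_\theta(X) \to c_2\mathbf{1}\set{\alpha^\top X \le c_1} + c_0$ in $\bL^2$. Taking $\lambda$ large then makes the $\bL^2$ distance as small as desired, establishing the claim when $D = 1$.

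Finally, for depth $D \ge 2$ the only remaining task is to transmit the computed signal through the extra hidden layers with a distortion the linear output can invert. For ReLU this is immediate: the relevant quantities are nonnegative and $\sigma$ is the identity on $[0,\infty)$, so a layer can copy them forward unchanged, and the two ramp units can be formed in the last hidden layer before being differenced at the output. For tanh form I use the near-linear regime: feeding a bounded signal through $\sigma(\eta\,\cdot + b)$ with small $\eta$ acts as the affine map $t \mapsto \sigma(b) + \sigma'(b)\eta t + O(\eta^2)$ about a point $b$ where $\sigma$ is differentiable with positive slope (as for sigmoid, tanh, and ELU), and the accumulated affine factors are absorbed into the output weights. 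I expect the main obstacle here to be bookkeeping rather than conceptual: routing the signal through the intermediate activations without losing it (keeping ReLU pre-activations nonnegative, or staying in the linear regime for tanh) and confirming the closing affine rescaling is realizable. The genuinely essential ingredients — the null boundary furnished by the density of $X$ and the uniform boundedness that together convert pointwise to $\bL^2$ convergence — are comparatively clean.
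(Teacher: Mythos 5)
Your construction is correct and, for $D=1$ and for ReLU networks of any depth, is essentially the paper's own proof: one sharp unit $\sigma(\lambda(c_1-\alpha^\top x))$ for tanh form, the clipped ramp $\sigma(u+1)-\sigma(u)$ built from two units and differenced by the linear output for ReLU (which is exactly why $w_D\geq 2$ is needed there), followed by pointwise convergence off the null hyperplane $\{\alpha^\top X=c_1\}$ and bounded convergence to get the $\bL^2$ statement. The one place you genuinely diverge is the propagation through layers $2,\dots,D$ for tanh-form activations: the paper simply composes, obtaining $\sigma^{(D)}(k(\alpha^\top x-c_1))$ with all intermediate weights equal to $e_1$, and asserts this tends pointwise to the indicator; you instead push the signal through the near-linear regime $\sigma(\eta t+b)\approx\sigma(b)+\sigma'(b)\eta t$ and undo the accumulated affine distortion at the output. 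Your route costs an extra hypothesis (differentiability of $\sigma$ with positive slope at some point, true for sigmoid/tanh/ELU but not literally part of the ``tanh form'' definition) and some bookkeeping you only sketch, but it is arguably the more careful one: as written, $\sigma^{(D)}(kz)$ with only the innermost argument scaled converges to $\sigma^{(D-1)}(1)\mathbf{1}\{z>0\}+\sigma^{(D-1)}(0)\mathbf{1}\{z<0\}$ rather than to $\mathbf{1}\{z>0\}$ when $D\geq 2$, so the paper's composition also needs a final affine correction at the output layer (or re-sharpened weights $k$ and biases $-k/2$ in the intermediate layers) --- precisely the kind of correction your argument builds in from the start. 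Your explicit handling of $\alpha=0$ via the output bias is a small additional point the paper leaves implicit.
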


The approximation capability of neural networks is a topic of intense study in the literature on neural networks, going under the name of ``universal approximation results". Some of the first landmark results in this area were obtained around the same time by Hornik and Cybenko \cite{hornik1989multilayer, cybenko1989approximation}, who proved that infinitely-wide, single layer neural networks can approximate a wide class of functions arbitrarily well. An improvement on these results, known as Barron's theorem \cite{barron1993universal} provides quantitative bounds on the approximation power of these networks. 

However, these early results do not explain the importance of depth, a definite contributor to the outstanding success of modern neural networks. One strand of recent work has focused on depth separations. This line of research exhibits functions which are difficult to express with neural networks of depth $d$, but easy with neural networks of greater depth. See \cite{eldan2016power, telgarsky2015representation, delalleau2011shallow, lee2017ability} and the surrounding references for some explicit examples. Stronger quantitative results were obtained recently in \cite{hanin2019universal, hanin2017approximating} for the ReLU nonlinearity, where the depth necessary to approximate certain classes of functions was determined.

We do not mention the complementary optimization perspective. Even if neural networks can easily express a given function, it may be difficult to learn the function with algorithms like gradient descent. See  \cite{malach2021connection} and the references therein for recent progress.

\subsection{A Partial Converse to Theorem \ref{thm:neural_networks_learn_everything}}

In this section, we establish a kind of converse to Theorem \ref{thm:neural_networks_learn_everything}. This theorem says, roughly, that for most classical statistical learning models $\set{f_\theta}_{\theta \in \bR^d}$, there exist a pair of random variables $(X,Y)$ satisfying $\E[\Var(Y|X)] < \Var(Y)$ such that the mean-square optimal model $\hat{f}$ satisfies $\text{MSE}(\hat{f}) = \Var(Y).$ That is, the best fitting model is the constant prediction $\E[Y].$ The distinguishing factor here is \textit{identifiability}, interpreted in both a local and a global sense.  

In the following, $B(\theta_0, R)$ denotes the ball of radius $R$ centered at $\theta_0$,  $M\succ 0$ means that a matrix $M$ is positive definite, $\norm{\theta}_2$ denotes the Euclidean norm of a vector $\theta\in \bR^d$, and $\norm{Y}_{\bL^2}$ denotes the $\bL^2$-norm of a random variable $Y$.

\begin{theorem}
\label{thm:population_nfl}
Consider any collection of functions  $\set{f_\theta}_{\theta \in \bR^d}$ with domain $\bR^p$ and range $\bR$ and  which are $C^2$ in the $\theta$ argument. Suppose there exist $\theta_0$ with $f_{\theta_0}$ constant and a random vector $X \in \bR^p$ defined on a probability space $(\Omega,\cl{F},\Prob)$ such that 
\begin{enumerate}
    \item $I(\theta_0) = \E[\nabla_\theta f_{\theta_0}(X)\nabla_\theta f_{\theta_0}(X)^\top] \succ 0$,
    \item For every $\epsilon > 0$ less than some $\e_0$, there exists $\delta > 0$ such that $\norm{f_{\theta}(X) - f_{\theta_0}(X)}_{\bL^2} < \delta$ implies $\norm{\theta - \theta_0}_2 < \e$.
    \item For some $R$, 
    \begin{equation}
    \label{eq:integrability_condition}
    \sup_{\theta \in B(\theta_0,R)} \biggl|\pdv[2]{}{\theta_i}{\theta_j}f_{\theta}(X)\biggr| \in \bL^2(\Prob)
    \end{equation}
    for all $i,j$.
    \item The support of $X$ (denoted $\supp X$) has cardinality greater than $d+1$. 
\end{enumerate}
Then there exists a function $g$ such that $Y:= g(X)$ satisfies (1) $g(X)$ is not constant and (2) $\E[(Y-f_\theta(X))^2]$ is minimized at $\theta=\theta_0$. %{\color{red} [Since have an explicit construction of $Y$, it may be a good idea to include that description in the theorem statement, following this first part. This can be used, for instance, in the logistic example and the other examples.]}
\end{theorem}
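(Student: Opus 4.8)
The plan is to construct the target explicitly, by perturbing the constant value of $f_{\theta_0}$ along a direction in $\bL^2$ that is invisible to the model's first-order behaviour at $\theta_0$, and then to use the three analytic hypotheses to upgrade ``critical point'' into ``global minimizer.'' Let $c$ denote the constant value of $f_{\theta_0}$ and write $L(\theta) := \E[(Y-f_\theta(X))^2]$. For any target of the form $Y-c =: t\,h(X)$ with $h\in\bL^2(\Prob)$ and scale $t>0$, a direct expansion gives
\[
L(\theta) - L(\theta_0) = \E[(f_\theta(X)-c)^2] - 2t\,\E[(f_\theta(X)-c)\,h(X)],
\]
so the whole argument reduces to showing that the quadratic term dominates the cross term for every $\theta$, once $t$ is small and $h$ is chosen orthogonal to the score directions.

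First I would construct $h$. Since $I(\theta_0)\succ 0$, the $d$ functions $\partial_{\theta_i}f_{\theta_0}(X)$ are linearly independent in $\bL^2(\Prob)$, so their span $V$ has dimension exactly $d$. Hypothesis (4) forces the ambient space of square-integrable functions of $X$ to have dimension at least $d+2$ (it equals $|\supp X|$ when the support is finite, and is infinite otherwise), hence $V^\perp$ has dimension at least $2$ and cannot consist of constants alone. I would therefore pick a non-constant $h\in V^\perp$, fix its normalization, and set $Y:=c+t\,h(X)$ with $t$ to be chosen last. This $Y$ is automatically non-constant and square-integrable, and the orthogonality $\E[h(X)\,\partial_{\theta_i}f_{\theta_0}(X)]=0$ is exactly what will kill the linear-in-$(\theta-\theta_0)$ part of the cross term.

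Next comes the local estimate on a ball $B(\theta_0,\epsilon)$. Taylor expanding $f_\theta-c=\nabla_\theta f_{\theta_0}(X)\cdot(\theta-\theta_0)+r(\theta)$, the linear part of the cross term vanishes by orthogonality of $h$, while the quadratic remainder $r$ is controlled in $\bL^2$ by hypothesis (3); Cauchy--Schwarz then bounds the cross term by $C_1\norm{\theta-\theta_0}_2^2$ for $\theta\in B(\theta_0,R)$. For the quadratic term, the same expansion together with $I(\theta_0)\succ 0$ gives $\E[(f_\theta-c)^2]\ge\frac{1}{2}\lambda_{\min}(I(\theta_0))\norm{\theta-\theta_0}_2^2$ once $\epsilon$ is small (the cubic and quartic corrections being negligible). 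Hence on the punctured ball, $L(\theta)-L(\theta_0)\ge(\frac{1}{2}\lambda_{\min}(I(\theta_0))-2tC_1)\norm{\theta-\theta_0}_2^2>0$ as soon as $t$ is small enough.

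The global estimate is where I expect the real work, and it is precisely what the strong-identifiability hypothesis (2) is for. Fixing $\epsilon<\min(\epsilon_0,R)$ from the local step, condition (2) supplies a $\delta>0$ such that $\norm{\theta-\theta_0}_2\ge\epsilon$ implies $\norm{f_\theta(X)-c}_{\bL^2}\ge\delta$. For such $\theta$, bounding the cross term crudely by Cauchy--Schwarz yields $L(\theta)-L(\theta_0)\ge\norm{f_\theta(X)-c}_{\bL^2}\bigl(\norm{f_\theta(X)-c}_{\bL^2}-2t\norm{h}_{\bL^2}\bigr)$, which is strictly positive provided $2t\norm{h}_{\bL^2}<\delta$. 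Shrinking $t$ to satisfy both the local and this global smallness condition simultaneously then makes $\theta_0$ the unique global minimizer of $L$, finishing the proof. The genuinely delicate point is the order of quantifiers: $\epsilon$ must be pinned down first (small enough for the quadratic lower bound and the remainder control), only afterwards is $\delta$ extracted from (2), and only then is $t$ driven down to reconcile the near and far regimes — this sequencing is what allows the local second-order picture and the global identifiability bound to be glued into a single global minimality statement.
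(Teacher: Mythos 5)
Your proof is correct, and it rests on the same three ingredients as the paper's: a non-constant perturbation $h$ orthogonal to the score functions (whose existence follows from the dimension count supplied by hypothesis (4)), a second-order Taylor bound on $B(\theta_0,R)$ combining hypotheses (1) and (3), and hypothesis (2) to control parameters far from $\theta_0$. The bookkeeping, however, is organized differently. The paper argues in the $\bL^2$ geometry: it shows the projection of $g(X)=c+h(X)$ onto $\set{f_\theta(X)}$ is $f_{\theta_0}$, splitting on whether $\norm{f_\theta(X)-c}_{\bL^2}$ exceeds $2\e$ (the far case being immediate from the triangle inequality), using strong identifiability only to convert ``close in function space'' into ``close in parameter space'' so that the Taylor estimates apply, and closing with the algebraic inequality $\sqrt{\e^2+u^2}-C_1u^2>\e$. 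You instead expand the excess risk $L(\theta)-L(\theta_0)=\E[(f_\theta(X)-c)^2]-2t\,\E[(f_\theta(X)-c)h(X)]$ and split on whether $\norm{\theta-\theta_0}_2<\e$, invoking the contrapositive of (2) to lower-bound $\norm{f_\theta(X)-c}_{\bL^2}$ in the far regime; this lets you tune a separate amplitude $t$ rather than the norm of $h$ itself and avoids the square-root algebra, at the cost of needing the extra (but routine) observation that the cubic and quartic corrections to $\E[(f_\theta(X)-c)^2]$ are dominated by $\lambda_{\min}(I(\theta_0))\norm{\theta-\theta_0}_2^2$ on a small enough ball. One cosmetic difference: you impose orthogonality of $h$ only to the span of the scores, not to the constants, so your $Y$ need not have mean $c$; this still proves the theorem as stated, though taking $h$ orthogonal to $1$ as well (the dimension count permits it) recovers the paper's interpretation that the best-fitting model is the constant prediction $\E[Y]$.
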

%In fact $Y$ is a deterministic function $X$. 
The first condition can be thought of as a \textit{local identifiability condition}. We will refer to the matrix $I(\theta_0)$ as the Fisher information matrix; it is indeed the Fisher information when the outputs of the neural network are corrupted by iid Gaussian noise.  If the Fisher information were not invertible, there would exist directions in parameter space where the output of the model would not change. The second condition we term \textit{strong identifiability}, because if it does not hold, then one can find two sequences of $\theta$'s are bounded away from each other but the corresponding $f_\theta$'s converge.

\subsection{Examples}
Before we discuss the implications for neural networks and the interaction with Theorem~\ref{thm:neural_networks_learn_everything}, let us showcase a few models which satisfy the assumptions of Theorem \ref{thm:population_nfl}. Linear regression with features $T_1(\cdot),\dots,T_d(\cdot)$ generally satisfies the assumptions of Theorem \ref{thm:population_nfl}, under some non-collinearity assumptions on $T.$ The model is parametrized by the vector $\beta \in \bR^d$, with 
\begin{equation}
\label{eq:linear_model}
f^{\textsf{lin}}_\beta(x) = \sum_{i=1}^d \beta_i T_i(x), \quad x\in \bR^p.
\end{equation}

\begin{corollary}[Linear Regression]
\label{cor:example_linear_regression}
Consider any random vector $X\in \bR^p$ with $|\supp X| > d+1$, such that the $d\times d$ matrix $\Sigma$ with entries $\Sigma_{ij} = \E[T_i(X)T_j(X)]$ is invertible, Then there exists a non-constant function $g$ such that for the data distribution $(X,g(X))$, the best-fitting model in square loss is $f^{\textsf{lin}}_\beta(x) = 0$, for $\beta = 0.$
\end{corollary}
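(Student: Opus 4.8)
The plan is to verify that the linear model of Eq.~\eqref{eq:linear_model} satisfies all four hypotheses of Theorem~\ref{thm:population_nfl} at the base point $\theta_0 = \beta = 0$, and then to invoke that theorem directly. Since $f^{\textsf{lin}}_0 \equiv 0$ is constant, this is an admissible choice of $\theta_0$; moreover $f^{\textsf{lin}}_\beta$ is linear, hence $C^2$, in $\beta$, so the smoothness hypothesis is immediate. Writing $T(x) = (T_1(x),\dots,T_d(x))^\top$, the key structural fact I would exploit is that $\nabla_\beta f^{\textsf{lin}}_\beta(x) = T(x)$ does not depend on $\beta$, and that all second derivatives in $\beta$ vanish identically.

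For the local identifiability condition (1), I would observe that
\[
I(\theta_0) = \E[\nabla_\beta f^{\textsf{lin}}_0(X)\, \nabla_\beta f^{\textsf{lin}}_0(X)^\top] = \E[T(X) T(X)^\top] = \Sigma.
\]
As a second-moment matrix, $\Sigma$ is automatically positive semidefinite; combined with the assumed invertibility, this forces all its eigenvalues to be strictly positive, so $\Sigma \succ 0$. For the integrability condition (3), since every second derivative $\partial^2 f^{\textsf{lin}}_\beta / \partial \beta_i \partial \beta_j$ vanishes, the supremum in Eq.~\eqref{eq:integrability_condition} is zero and trivially lies in $\bL^2(\Prob)$. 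Condition (4) is exactly the assumption $|\supp X| > d+1$ made in the corollary, so nothing is required there.

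The only step demanding a short computation is the strong identifiability condition (2), where I would again use linearity: for any $\beta$,
\[
\norm{f^{\textsf{lin}}_\beta(X) - f^{\textsf{lin}}_0(X)}_{\bL^2}^2 = \E[(\beta^\top T(X))^2] = \beta^\top \Sigma \beta \geq \lambda_{\min}(\Sigma)\, \norm{\beta}_2^2,
\]
where $\lambda_{\min}(\Sigma) > 0$ by condition (1). Hence, given any $\epsilon > 0$, the choice $\delta = \epsilon \sqrt{\lambda_{\min}(\Sigma)}$ guarantees that $\norm{f^{\textsf{lin}}_\beta(X) - f^{\textsf{lin}}_0(X)}_{\bL^2} < \delta$ implies $\norm{\beta}_2 < \epsilon$; in fact this holds globally, with no restriction to small $\epsilon$. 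With all four hypotheses verified, Theorem~\ref{thm:population_nfl} directly supplies a non-constant $g$ such that the best $\bL^2$-fit of $f^{\textsf{lin}}_\beta$ to the distribution $(X, g(X))$ is attained at $\beta = 0$, i.e.\ at the constant predictor $f^{\textsf{lin}}_0 \equiv 0$, which is the claim.

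Because each hypothesis collapses to a one-line check, I do not anticipate a genuine obstacle; the two points that deserve care are recognizing that a positive-semidefinite \emph{and} invertible matrix is positive definite (condition 1), and the smallest-eigenvalue bound that converts $\bL^2$-closeness of the fitted functions into closeness of the parameters (condition 2). The heavy lifting — constructing $g$ and certifying optimality at $\theta_0$ — is entirely deferred to Theorem~\ref{thm:population_nfl}.
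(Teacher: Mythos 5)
Your proposal is correct and follows essentially the same route as the paper's own proof: both verify the four hypotheses of Theorem~\ref{thm:population_nfl} at $\beta_0 = 0$ by identifying $I(\theta_0)$ with $\Sigma$, noting the Hessian vanishes, and deriving strong identifiability from the bound $\norm{f^{\textsf{lin}}_\beta(X) - f^{\textsf{lin}}_0(X)}_{\bL^2}^2 = \beta^\top \Sigma \beta \geq \lambda_{\min}(\Sigma)\norm{\beta}_2^2$. Your write-up is slightly more explicit (e.g.\ spelling out that positive semidefinite plus invertible gives positive definite, and giving the explicit $\delta$), but there is no substantive difference.
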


By orthogonality, the result can be extended to random variable $Y$ such that $\E[Y|X] = g(X)$ above. A full proof can be found in Section \ref{proofsec}. \\

Logistic Regression provides another example. Specifically, models of the form 
\[
f^{\textsf{log}}_\theta(x) = \frac{\exp(\beta_0 + \beta^\top x)}{1 + \exp(\beta_0 + \beta^\top x)}, \quad x \in \bR^p
\] 
satisfy the conditions of Theorem \ref{thm:population_nfl}, parametrized by $\theta = (\beta_0,\beta) \in \bR^{p+1}.$ 

\begin{corollary}[Logistic Regression.]
\label{cor:logistic_regression}
Take $X$ to be the random vector with iid Gaussian coordinates and consider $\theta_0 = (\beta_0,\beta)$ with $\beta_0 = 0, \beta = 0.$  Consider any random variable $Y$ such that $\E[Y|X] = \frac{1}{2} + \e g_0(X)$ with a bounded even function $g_0$ with $\E[g_0(X)] = 0$, and $\e$ small enough. Then for the data distribution $(X,Y)$, the best-fitting logistic model in square loss is $f_{\theta_0}^{\sf{log}} = 1/2$. 
%Then there exists a function $g$ such that for the data distribution $(X,g(X))$, the best-fitting logistic model in square loss is $f_{\theta_0}^{\sf{log}} = 1/2$. 
\end{corollary}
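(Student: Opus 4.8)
The plan is to deduce the result from Theorem \ref{thm:population_nfl} by verifying its four hypotheses for the logistic model with $X$ standard Gaussian and $\theta_0 = 0$, and then running the argument behind the theorem for the specific perturbation $g := \tfrac12 + \e g_0$. First I would reduce to a statement purely about $\E[Y\mid X]$: by the bias--variance decomposition $\E[(Y - f_\theta(X))^2] = \E[\Var(Y\mid X)] + \E[(\,\E[Y\mid X] - f_\theta(X))^2]$, the first term is constant in $\theta$, so minimizing over $\theta$ is the same as minimizing $L(\theta) := \norm{\tfrac12 + \e g_0(X) - f_\theta(X)}_{\bL^2}^2$. Using $f_{\theta_0}\equiv\tfrac12$, I would expand
\[
L(\theta) - L(\theta_0) = \norm{f_\theta(X) - f_{\theta_0}(X)}_{\bL^2}^2 - 2\e\,\E\big[g_0(X)\,(f_\theta(X) - f_{\theta_0}(X))\big],
\]
so the task becomes showing the right-hand side is nonnegative for every $\theta$ once $\e$ is small.

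The crucial structural input is an orthogonality. A direct computation gives $\nabla_\theta f_{\theta_0}(X) = \tfrac14(1, X_1,\dots,X_p)$, so the first-order part of the cross term involves only $\E[g_0(X)]$ and $\E[g_0(X)X_i]$. The first vanishes by assumption, and since $g_0$ is even and the law of $X$ is symmetric under $X\mapsto -X$, we get $\E[g_0(X)X_i] = 0$; hence $g_0$ is orthogonal to the tangent space $\mathrm{span}\{\partial_{\theta_i} f_{\theta_0}(X)\}$ and $\theta_0$ is a critical point. Using the $C^2$ smoothness together with the $\bL^2$ bound on second derivatives (hypothesis (3), which holds here because $\sigma''$ is bounded and $X$ has all moments), Taylor's theorem gives $\norm{f_\theta - f_{\theta_0} - \langle\nabla f_{\theta_0},\theta-\theta_0\rangle}_{\bL^2}\le C\norm{\theta-\theta_0}_2^2$ on a ball $B(\theta_0,R)$; with Cauchy--Schwarz and boundedness of $g_0$ this yields $|\E[g_0(f_\theta - f_{\theta_0})]|\le C'\norm{\theta-\theta_0}_2^2$ there. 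On the other side, the Fisher matrix is $I(\theta_0) = \tfrac1{16}\E[(1,X)(1,X)^\top] = \tfrac1{16}I_{p+1}\succ 0$ (hypothesis (1)), so a second Taylor expansion gives $\norm{f_\theta - f_{\theta_0}}_{\bL^2}^2 \ge \tfrac12\lambda_{\min}(I(\theta_0))\norm{\theta-\theta_0}_2^2$ on a smaller ball $B(\theta_0,\rho)$. Hence there $L(\theta) - L(\theta_0)\ge(\tfrac12\lambda_{\min} - 2\e C')\norm{\theta-\theta_0}_2^2\ge 0$ for $\e$ small, giving local minimality.

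To upgrade to a global minimum, for $\norm{\theta-\theta_0}_2\ge\rho$ I would invoke strong identifiability (hypothesis (2)), whose contrapositive yields $\eta := \inf_{\norm{\theta-\theta_0}_2\ge\rho}\norm{f_\theta - f_{\theta_0}}_{\bL^2} > 0$; since $f_\theta\in[0,1]$ we have $\norm{f_\theta - f_{\theta_0}}_{\bL^2}\le 1$, so $L(\theta) - L(\theta_0)\ge \eta^2 - 2\e\norm{g_0}_{\bL^2} > 0$ for $\e$ small. Together these make $\theta_0 = 0$ the global minimizer, i.e.\ the best-fitting logistic model is the constant $\tfrac12$.

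Hypotheses (1), (3), (4) are immediate for the Gaussian design — (4) because the standard Gaussian has support $\bR^p$ — so the main obstacle is verifying strong identifiability (2), equivalently the quantitative claim that $f_{\theta_n}\to f_{\theta_0}$ in $\bL^2$ forces $\theta_n\to\theta_0$. Here the rotational invariance of the standard Gaussian is decisive: $\norm{f_\theta - \tfrac12}_{\bL^2}^2$ depends on $\theta$ only through $(\mu,s) = (\beta_0,\norm{\beta}_2)$ via $h(\mu,s) = \E_{Z\sim N(0,1)}[(\sigma(\mu + sZ) - \tfrac12)^2]$, which is continuous, vanishes only at the origin, and satisfies $h\to\tfrac14$ as $\norm{(\mu,s)}_2\to\infty$ (the Gaussian argument either escapes to $\pm\infty$ or the logistic curve sharpens toward a half-space indicator, both limits pushing the output a fixed distance from $\tfrac12$). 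Consequently the sublevel sets $\{h\le\delta\}$ are bounded and shrink to $\{0\}$ as $\delta\to 0$, which is exactly hypothesis (2). This $(\mu,s)$ reduction, and the careful handling of the $\norm{\theta}_2\to\infty$ regime where $f_\theta$ degenerates to a step function, is where I expect the real work to lie.
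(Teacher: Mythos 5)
Your proposal is correct, and its skeleton matches the paper's: reduce to fitting $\E[Y\mid X]$ via the bias--variance decomposition, compute $\nabla_\theta f^{\sf{log}}_{\theta_0}(X)=\tfrac14(1,X)$ and $I(\theta_0)=\tfrac1{16}I_{p+1}\succ 0$, observe that a bounded even mean-zero $g_0$ is orthogonal to $\mathrm{span}\{1,X_1,\dots,X_p\}$ by symmetry, and note (as you correctly do) that the literal statement of Theorem \ref{thm:population_nfl} only gives existence of \emph{some} $g$, so one must re-run its argument for the specific perturbation $\e g_0$ --- the paper does this by ``inspecting the proof,'' you do it by directly expanding $L(\theta)-L(\theta_0)=\norm{f_\theta-\tfrac12}_{\bL^2}^2-2\e\langle g_0, f_\theta-\tfrac12\rangle$ and splitting into a local regime (Fisher lower bound vs.\ Taylor remainder) and a far regime (strong identifiability), which is an equivalent but somewhat cleaner packaging than the paper's triangle-inequality manipulation around the linearization. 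The one genuinely different step is the verification of strong identifiability, which you rightly flag as the real work: you use rotational invariance to reduce $\norm{f_\theta-\tfrac12}_{\bL^2}^2$ to a two-variable function $h(\mu,s)$ and argue softly that its sublevel sets are compact and shrink to the origin because $h\to\tfrac14$ at infinity (this limit does hold in all regimes --- $s\to\infty$ forces $\Prob(|\mu+sZ|\le M)\to 0$ by the bounded Gaussian density, and bounded $s$ with $|\mu|\to\infty$ is handled by dominated convergence --- so your sketch closes). The paper instead runs a quantitative argument: Markov's inequality turns the $\bL^2$ bound into $\Prob(|\beta^\top X+\beta_0|>C_\delta)\le\tfrac14$, symmetry of $\beta^\top X$ then pins down $|\beta_0|\le C_\delta$, and Gaussian anti-concentration pins down $\norm{\beta}_2\lesssim C_\delta$. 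Your route is shorter for the standard Gaussian but leans on rotational invariance and a compactness/behavior-at-infinity analysis; the paper's route yields explicit bounds and isolates exactly which properties of the design (symmetry plus anti-concentration) are used, which is what supports its remark that the corollary extends to other covariate distributions.
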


The result can likely be extended to other random vectors with suitable anti-concentration properties, as the proof shows.

\section{Discussion}
\label{sec:discussion}
Theorem \ref{thm:neural_networks_learn_everything} shows that neural networks do not satisfy the conditions of Theorem \ref{thm:population_nfl}. It is clear that the identifiability conditions fail for neural networks, and therefore distinguish neural networks from standard parametric models. As an illustration, we revisit the example given in the introduction, concerning the logistic model and the one-layer neural network models with a one-dimensional covariate.
\begin{align*}
f^{\sf{log}}_{\theta}(x) & = \frac{e^{\alpha + \beta x}}{1+e^{\alpha + \beta x}}, \quad \theta = \left(\alpha, \beta \right) \in \bR^{2} \\
f^{\sf{NN}}_\theta(x) & = \gamma + \delta \frac{e^{\alpha + \beta x}}{1+e^{\alpha+\beta x}}, \quad \theta = \left( \alpha, \beta, \gamma, \delta \right) \in \bR^{4}.
\end{align*}
Due to the presence of the additional parameters $\gamma,\delta$ the neural network models are no longer identifiable. For example, taking the parameters $(\alpha,\beta,\gamma,\delta) = (0,0,0,1)$ and $(\alpha',\beta',\frac{1}{2},0)$ for any $\alpha',\beta'$ both yield the same function $f^{\sf{NN}}_\theta(x) = 1/2.$ Thus the strong identifiability condition in Theorem \ref{thm:population_nfl} fails. In addition, the local identifiability condition fails. To illustrate this, we can compute the gradients of both function classes with respect to their parameters. Let $\sigma(x) := \frac{e^x}{1+e^x}, m_1(x) := \sigma(\alpha + \beta x),$ and $m_2(x) := \sigma'(\alpha + \beta x)$. Then
\begin{align*}
\nabla_\theta f_\theta^{\sf{log}}(x) & = \left(m_2(x), x m_2(x) \right) \\
\nabla_\theta f_\theta^{\sf{NN}}(x) & = \left(\delta m_2(x), \delta x m_2(x),1,m_1(x) \right).
\end{align*}

Then 
\[
\E[\nabla_\theta f_\theta^{\sf{log}}(X) \nabla_\theta f_\theta^{\sf{log}}(X)^\top]=
\begin{bmatrix}
   \E[m_2^2(X)] & \E[X m_2^2(X)] \\ 
   \E[X m_2^2(X)] & \E[X^2 m_2^2(X)] 
\end{bmatrix}.
\]
Let $\bb{Q}$ be the measure with Radon--Nikodym Derivative $\frac{d\bb{Q}}{d\Prob}(x) = m_2^2(x).$ Then the Fisher information matrix for the logistic model is invertible whenever $X$ is non-constant under $\bb{Q}.$ On the other hand, the Fisher information matrix for the neural network model is not invertible for a wide range of parameters. To see this, consider $\beta = 0,$ in which case $m_2,m_1$ are constants. This implies the Fisher information matrix for the neural network model is not invertible, as the last two elements of $\nabla_\theta f_\theta^{\sf{NN}}(x)$ are constants.

For multilayer feedforward neural networks identifability generally fails to hold. The strong identifiability condition fails for symmetry reasons: one could permute the weights between neurons in each layer and the output variable would still be the same. The scale homogeneity of the ReLU function can also be leveraged to show lack of strong identifiability; \cite{bui2020functional} shows that for some ReLU architectures, permutation and scaling are the only function-preserving weight transformations. Theorem 4.2 of \cite{petersen2021topological} proves a related stronger result: neural networks close in the $L^\infty$ function norm can have no parametrizations with close weights. Regarding the local identifiability condition, it turns out that questions about the spectrum of the Fisher information matrix are of great interest in the deep learning literature. For example, \cite{fukumizu1996regularity} gives a sufficient condition for the Fisher information matrix (for any positive continuous density) of deep neural networks to be strictly positive definite, in terms of a criterion called \textit{irreducibility}. The field of singular learning theory \cite{watanabe2009algebraic, watanabe2018mathematical, wei2022deep, amari2003learning} has investigated these questions in considerable detail. See \cite{wei2022deep, farrugiaroderts2022structural} and the references therein for results on the singularity of neural network models. Related work includes \cite{pennington2018spectrum}, which analyzes a single hidden layer neural network in the population limit. The paper studies the infinite depth limit, with Gaussian data and random Gaussian parameters. They characterize the spectrum completely, showing that it is well separated from zero. 

The main interest in the spectrum is to prove a long-standing conjecture about the structure of the empirical Fisher information: most of its eigenvalues are bulked together near zero while there are a few extremely large ones, which are known to cause issues in optimization. For example \cite{karakida2019universal, karakida2019pathological, karakida2019normalization} study the spectrum of the Fisher information in deep neural networks in the mean field limit. This work and many other empirical works lend support to this rough picture for the spectrum \cite{papyan2019measurements, ghorbani2019investigation}. All of these works analyze the model with finite samples, not at the population level. Related to this literature are the studies on the loss landscape of deep neural networks. Many of these papers focus on the idea of `flat' minima as critical to generalization \cite{sagun2017empirical,dinh2017sharp}.

Separately, Theorem \ref{thm:population_nfl} has a passing resemblance to No Free Lunch theorems \cite{wolpert1996lack, shalev2014understanding}, which show that for any learning algorithm, there exists data distributions for which the algorithm generalizes poorly to out-of-sample data. However, Theorem \ref{thm:population_nfl} is instead a population-level statement about the best-fitting model in a class of models.

\section{Proofs}\label{proofsec}

\begin{proof}[Proof of Lemma \ref{lemma:correlated_indicator_regression_case}]
Suppose for contradiction that 
\[
\E[\E[Y|X]\mathbf{1}\set{\alpha \cdot X < t}] = \E[Y] \Prob(\alpha \cdot X < t)
\]
for every choice of $\alpha,t$. Using the functional monotone class theorem, we see that 
\[
\E[\E[Y|X]f(\alpha \cdot X)] = \E[Y] \E[f(\alpha \cdot X)]
\]
for any $f:\bR \rightarrow \bR$ bounded and measurable. Indeed, let $\cl{H}$ be the set of functions $h$ such that 
\[
\E[\E[Y|X]h(\alpha \cdot X)] = \E[Y] \E[h(\alpha \cdot X)],
\]
and $\cl{H}_+ := \set{h \in \cl{H}: h > 0}.$
Notice that $\cl{H}$ is a vector space, contains the constant function $1$, and $\cl{H}_+$ is closed under bounded increasing limits by the dominated convergence theorem. By supposition, $\cl{H}$ contains the set of functions $\cl{K} := \set{ c\mathbf{1}\set{x < t},\forall c,t}$. Applying the functional monotone class theorem, we find that $\cl{H}$ contains all bounded measurable functions. In particular, 
\[
\E[\E[Y|X]e^{i\alpha \cdot X}] = \E[Y] \E[e^{i\alpha \cdot X}],
\]
where $i = \sqrt{-1}$. Let $\mathcal{F}$ be the set of all $g\in \bL^1(\bR^p)$ such that its Fourier transform $\hat{g}$ is in $\bL^1(\bR^p)$. Take any $g\in \mathcal{F}$. Multiplying both sides of the above display by $\hat{g}(\alpha)$, integrating both sides with respect to $\alpha$, and switching order of integration by Fubini's theorem, we conclude that 
\begin{equation}
\label{eq:no_correlation_fourier}
\E[\E[Y|X]g(X)] = \E[Y]\E[g(X)],    
\end{equation}
by the Fourier inversion theorem. Fubini's theorem is justified since $\widehat{g} \in \bL^1(\bR^p), \E[Y|X] \in \bL^1(\Prob).$
%Now, $\mathcal{F}$ is a dense subspace of $\bL^2(\bR^p)$. From this fact and the above equation, it is now easy to conclude that Eq.~\eqref{eq:no_correlation_fourier} holds for any $g\in \bL^2(\bR^p)$. 
By approximation, we will show that for any rectangle $A = \bigotimes_{i=1}^p A_i,$ with $A_i := [a_i,b_i]$,
\begin{equation}
\label{eq:rectangle_no_corr}
\E[\E[Y|X]\mathbf{1}_{A}(X)] = \E[Y]\E[\mathbf{1}_{A}(X)].
\end{equation}
Since the set of all rectangles generates the Borel sigma algebra, we conclude from the definition of the conditional expectation that $\E[Y|X] = \E[Y]$.

To show Eq. \eqref{eq:rectangle_no_corr}, first notice that $C_c^\infty(\bR^p)$ is contained in $\cl{F}$, see e.g. \cite{folland1999real}. For each $i$, take a smooth bump function $\psi_{i,\e}$ which is equal to $1$ on $A_i$, zero outside $A_{i,\e} := [a_i - \e,b_i + \e]$, and is between $[0,1]$ on $A_{i,\e}.$ Then the function $\phi_{A,\e} := \prod_{i=1}^p \psi_{i,\e}(x_i)$ from $\bR^p $ to $\bR$ is contained in $C_c^\infty(\bR^p)$, bounded in $[0,1]$, and approaches $\mathbf{1}_A(x)$ pointwise as $\e \rightarrow 0.$ Then, because $\phi_{A,\e}$ and $\mathbf{1}_A$ are bounded and $\E[Y|X] \in \bL^1$, dominated convergence theorem shows that
\begin{align*}
     \E[\E[Y|X]\phi_{A,\e}(X)] & \xrightarrow{\e\downarrow 0} \E[\E[Y|X]\mathbf{1}_{A}(X)] \\
    \E[\phi_{A,\e}(X)] & \xrightarrow{\e\downarrow 0} \E[\mathbf{1}_{A}(X)] 
\end{align*}
Using Eq. \eqref{eq:no_correlation_fourier} on the functions $\phi_{A,\e}$, we conclude Eq. \eqref{eq:rectangle_no_corr} as desired.
 %{\color{red} [find and give a reference]} 
% \tim{Now since the conditional expectation $\E[Y|X]$ is measurable with respect to $\sigma(X)$, it can be written as $c(X)$ for a Borel measurable function $c:\bR^p\to \bR$.} Define two functions $f_+$ and $f_-$ from $\bR^p$ into $\bR$ as $f_+(x) = \mathbf{1}\set{c(x) - \E Y > 0}$ and $f_-(x) = \mathbf{1}\set{c(x) - \E Y < 0}$, 
% \tim{which are bounded and therefore in $\bL^2$. %It is easy to construct sequences of functions in $\bL^2(\bR^p)$ that increase to $f_+$ and $f_-$, respectively {\color{red} [please add a proof of this claim]}. \todo{Do I need this? They're already bounded} 
% Thus, applying Eq.~\eqref{eq:no_correlation_fourier} with $g := f_+ - f_-$, we conclude that}
% \[
% \E[(c(X)-\E Y)\mathbf{1}\set{c(X) > \E Y}] = \E[(c(X)-\E Y)\mathbf{1}\set{c(X) < \E Y}] = 0.
% \]
% This shows that $\E[Y|X] = \E[Y]$ almost surely, which is the desired contradiction.
\end{proof}

\begin{proof}[Proof of Lemma \ref{lemma:approximation_neural_networks}]
We will show the statement in two cases, depending on the type of the activation function. Let us first suppose the activation function $\sigma(x)$ is of tanh type. Set $W_1^{(1)} = k\alpha$, $b_1^{(1)} = -kc_1$, and other parameters in the first layer $[W_j^{(1)}, b_j^{(1)}, j =2,\dots,w_1]$ to zero. Recall that $W^{(1)}_j$ represents the $j$th row of the first layer weight matrix, and $b_j^{(1)}$ is the $j$th component of the bias vector. The parameter $k$ will be made arbitrarily large in the end. Thus the first unit in the first hidden layer $H_1^{(1)}(x)$ models the function $\sigma(k(\alpha^\top x - c_1))$. 

Now, set the connection weights to be $W_1^{(d)} = e_1$ for all $d \geq 2$ and all other parameters to zero. Let $\sigma^{(D)}$ represent the $D$-fold composition of the function $\sigma$. Then the first neuron in the last hidden layer models the function
\[
\sigma^{(D)}(k(\alpha^\top x - c_1)).
\]
%Noticing that $\sigma^{(D)}(kx)) \to \mathbf{1}\set{x > 0}$ as $k\to \infty$, we conclude that the output of the neural network can be made arbitrarily close to $c_2\mathbf{1}\set{\alpha^\top X \leq c_1} + c_0$ for the proper choice of weights in the linear output layer. 
Take the last layer weight vector as $-c_2e_1$ and the bias constant as $c_0+c_2$. With these parameters, the neural network represents the function
\[
c_2\left(1-\sigma^{(D)}(k(\alpha^\top x - c_1)) \right) + c_0.
\]
Let $Z := \alpha^\top X - c_1$. Then it suffices to show that as $k \rightarrow \infty$,
\[
\E\left[\left(\sigma^{(D)}(kZ) - \mathbf{1}\set{Z > 0} \right)^2 \right] \rightarrow 0.
\]
This follows by dominated convergence theorem: $\sigma^{(D)}(kx)$ pointwise converges to $\mathbf{1}\set{x > 0}$ except at $x = 0$, $Z$ has a density, and both $\sigma^{(D)}$ and $\mathbf{1}\set{x > 0}$ are bounded.

Next, suppose that our activation $\sigma(x)$ is the ReLu function. Let us make two observations. First, $\sigma(\sigma(x)) = \sigma(x).$ Second, the function $g(x) := \sigma(x) - \sigma(x-1)$ takes the value $1$ on $x \geq 1,$ is zero on $x \leq 0$, and is linear in between. This implies that $1 - g(kx) \to\mathbf{1}\set{x \leq 0}$ pointwise as $k\to \infty$.  Take $W_1^{(1)} = k\alpha$,  $b_1^{(1)} = -kb_1$, and the other layer weights and biases to be $W_1^{(d)} = e_1, b^{(d)} = 0$ for all $d \geq 2,\dots,D$. Finally, set $b_2^{(D)} = -1,W_2^{(D)} = e_1$, so that $H_1^{(D)} = \sigma^{(D)}(k(\alpha^\top x - c_1)), $ and $H_2^{(D)} = \sigma^{(D)}(k(\alpha^\top x - c_1) - 1).$

%Using our observations on the ReLU function, the output of the neural network is arbitrarily close to 
% \[
% b_2\mathbf{1}\set{\alpha^\top X \leq b_1} + b_0
% \]
% for the right choice of parameters in the linear output layer. 

Set the final layer weights as $W^{(D+1)} = c_2e_2 - c_2e_1$, with $b^{(D+1)} = c_0 + c_2.$ Then the neural network equals the function
\[
c_2\left[1 - g\left(k\left(\alpha^\top x - c_1 \right) \right)\right] + c_0.
\]
Let $Z := \alpha^\top X - c_1.$ As $k \rightarrow \infty$, by the dominated convergence theorem,
\begin{align*}
    & \E[\left(c_2 \left[ 1 - g\left(kZ\right) \right] + c_0 - c_2\mathbf{1}\set{Z \leq 0} - c_0 \right)^2] \\
    &=  c_2^2 \E\left[\left( g(kZ) - \mathbf{1}\set{Z > 0}\right)^2 \right] \rightarrow 0.
\end{align*}
\end{proof}

\begin{proof}[Proof of Theorem \ref{thm:neural_networks_learn_everything}]
Because $\E[\Var(Y|X)] < \Var(Y)$, it must be that $\E[Y|X] \neq \E[Y]$ with positive probability. Lemma \ref{lemma:correlated_indicator_regression_case} implies the existence of parameters $\alpha, b$ such that $\E[Y|X]$ is correlated with $\mathbf{1}\set{\alpha^\top X + b \leq 0}.$ Let $A := \set{\alpha^\top X + b \leq 0}.$ Consider the best linear predictor of $Y$ based on $\mathbf{1}_A$,
\begin{equation}
\label{eq:best_linear_predictor}
c_1\mathbf{1}_A + c_0,
\end{equation}
given by
\begin{align*}
    c_1 & = \frac{\Cov(Y,\mathbf{1}_A)}{\Var(\mathbf{1}_A)}, \\
    c_0 & = \E[Y] - c_1 \Prob(A).
\end{align*}
The mean squared error of this best linear predictor can be explicitly calculated as 
\[
\Var(Y) - \frac{\Cov(Y,\mathbf{1}_A(X))^2}{\Var(\mathbf{1}_A(X))}.
\]
By Lemma \eqref{lemma:approximation_neural_networks}, there exist parameters for which the neural network architecture fits Equation \ref{eq:best_linear_predictor} arbitrarily well. 
That is, for any $\e > 0$, there exists parameters $\theta = \set{W^{(i)}, b^{(i)}}_{1 \leq i \leq D+1}$ such that the neural network $f^{\textsf{NN}}_\theta$ satisfies 
\[
\E[\left(f_\theta(X) - c_1\mathbf{1}_A(X) - c_0\right)^2] \leq \e.
\]
Therefore, 
\[
\text{MSE}(\hat{f}) < \Var(Y),
\]
as the covariance of $Y$ and $\mathbf{1}_A(X)$ is nonzero.
\end{proof}

\subsection{Proof of the converse theorem}

\begin{proof}[Proof of Theorem \ref{thm:population_nfl}]
    Let $f_{\theta_0} = c$. For any $f_\theta(X), \theta \in \bR^d$, define its \textit{linearization} 
    \[
    Lf_\theta(X) := f_{\theta_0}(X) + (\theta - \theta_0)^\top \nabla_\theta f_{\theta_0}(X).
    \]
    Because $|\supp X| > d+1$, the vector space consisting of functions $f(X) \in \bL^2(\Prob)$ has dimension greater than $d+1$. Thus, by the Gram--Schmidt algorithm, we may pick a nonconstant function $h(X)$ orthogonal to the collection $\set{1,\partial_{1} f_{\theta_0}(X),\dots,\partial_{d} f_{\theta_0}(X)}$, where $\partial_i f_{\theta}(x)$ denotes the partial derivative of $f$ in $\theta_i$. Scale $h(X)$ such that $\norm{h(X)}_{\bL^2} = \e$, for a parameter $\e$ that we will choose later. Define $g(X) := f_{\theta_0}(X) + h(X) = c + h(X)$. Because $h(X)$ is orthogonal to $1$, it has mean zero, and so $\Var g(X) = \e^2$. The random variable $g(X)$ will be our candidate for $Y$, for $\e$ chosen small enough. 

    We will show that the $\bL^2$ projection of $g(X)$ onto the set $\cl{S}_X := \set{f_\theta(X), \theta \in \bR^d}$ is $f_{\theta_0}.$ To begin, notice that $\|g(X)-c\|_{\bL^2} = \e$ by construction. For any other $f_\theta(X) \in \cl{S}_X$ we must prove that 
    \[
    \|f_\theta(X)-g(X)\|_{\bL^2} > \e.
    \]
    Notice that we need only consider $f_\theta$ such that $\|f_\theta(X)- c\|_{\bL^2} \leq 2\e$, because otherwise, we already have 
    \[
    \|f_\theta(X)-g(X)\|_{\bL^2} \geq \|f_\theta(X)- c\|_{\bL^2} - \|g(X)- c\|_{\bL^2} > \e.
    \]
    Throughout the rest of the proof, we will restrict to only working with $\theta$ satisfying the above constraint. By the triangle inequality, note that
    \begin{equation}
    \label{eq:popnfl_1}
         \|g(X)-f_\theta(X)\|_{\bL^2} \geq \|g(X)- Lf_\theta(X)\|_{\bL^2} - \|f_\theta(X)- Lf_\theta(X)\|_{\bL^2}.
    \end{equation}
    By our choice of $h(X)$, we have $g(X)-c$ and $Lf_\theta(X)-c$ are orthogonal. Thus: 
    \begin{equation}
    \label{eq:popnfl_2}
    \|g(X)- Lf_\theta(X)\|_{\bL^2} = \sqrt{\|g(X)-c\|_{\bL^2}^2 + \|c- Lf_\theta(X)\|_{\bL^2}^2}.
    \end{equation}
    Next, we claim that 
    \begin{equation}\label{eq:toshow}
    \|f_\theta(X)-Lf_\theta(X)\|_{\bL^2} \leq C_1 \|c-Lf_{\theta}(X)\|_{\bL^2}^2
    \end{equation}
    for a positive constant $C_1$ that does not depend on $\theta$.  To show this, first note that by Taylor approximation, we have
    \[
    f_\theta(x) = f_{\theta_0}(x) + (\theta - \theta_0)^\top \nabla_\theta f_{\theta_0}(x) + R_x(\theta - \theta_0),
    \]
    where we may write 
    \[
    R_x(\theta - \theta_0) = (\theta - \theta_0)^\top H_\theta f_{\theta_0 + c_x(\theta - \theta_0)}(\theta - \theta_0).
    \]
    for some constant $c_x \in [0,1],$ and where $H_\theta f_{\theta'}$ is the Hessian of $f$ at $\theta'.$ As a result,
    \begin{gather*}
        \|f_\theta(X) - Lf_\theta(X)\|_{\bL^2} = (\E R_X(\theta - \theta_0)^2)^{1/2},
    \end{gather*}
    and
    \begin{gather*}
        |R_x(\theta - \theta_0)| \leq \norm{H_\theta f_{\theta_0 + c_x(\theta - \theta_0)} }_{\text{op}} \norm{\theta - \theta_0}_2^2.
    \end{gather*}
    Separately, note that
    \begin{gather*}
        \|c- Lf_\theta(X)\|_{\bL^2}^2 = (\theta - \theta_0)^\top I(\theta_0)(\theta - \theta_0),
    \end{gather*}
    where $I(\theta_0)_{ij} = \E[\partial_{i} f_{\theta_0}(X)\partial_{j} f_{\theta_0}(X)].$ 
    
    Now, let $R$ be as in the statement of the theorem. When $\norm{\theta - \theta_0}_2 < R,$ we can bound $\norm{H_\theta f_{\theta_0 + c_x(\theta - \theta_0)}}_{op}$ by 
    \[
    C_d \max_{i,j} \sup_{\theta \in B(\theta_0, R)}|\partial^2_{\theta_i, \theta_j} f_{\theta}(x)|
    \]
    for some constant $C_d$ depending on $d$. By the third assumption of the theorem, the above quantity is square integrable. Meanwhile, 
    \begin{equation}\label{eq:positive}
    \|Lf_\theta(X) - c\|_{\bL^2}^2 \geq \norm{\theta - \theta_0}^2_2\lambda_{\min}(I(\theta_0)),
    \end{equation}
    where $\lambda_{\min}(I(\theta_0))$ denotes the smallest eigenvalue of $I(\theta_0)$. From this we obtain 
    \[
    \|f_\theta(X) - Lf_\theta(X)\|_{\bL^2} \leq C_1\|Lf_{\theta}(X) - c\|_{\bL^2}^2
    \]
    with 
    \[
    C_1 = \frac{C_d\norm{\max_{i,j} \sup_{\theta \in B(\theta_0, R)}|\partial_i \partial_j f_{\theta}(X)|}_{\bL^2}}{\lambda_{\min}(I(\theta_0))}, 
    \]
    which is finite by our assumptions. This proves Equation~\eqref{eq:toshow}. Combining Equations \eqref{eq:popnfl_1}, \eqref{eq:popnfl_2} and \eqref{eq:toshow}, we get
    \begin{equation}\label{eq:diffbd}
    \|g(X)-f_\theta(X)\|_{\bL^2} \geq \sqrt{\e^2 + \|c- Lf_\theta(X)\|_{\bL^2}^2} - C_1\|c-Lf_{\theta}(X)\|_{\bL^2}^2.
    \end{equation}
    Recall that $\e$ remains to be chosen. If we could choose $\e$ such that that $\e \leq 1/4C_1$, and 
    \begin{equation}\label{eq:maincond}
    \|c-Lf_\theta(X)\|_{\bL^2} < \sqrt{\frac{1 - 2\e C_1}{C_1^2}},
    \end{equation}
    the proof would be complete, by the following reasoning. Using equation \eqref{eq:positive} and the fact that $\theta \neq \theta_0$, we see that $\|c-Lf_\theta(X)\|_{\bL^2}>0$. Thus, squaring both sides of \eqref{eq:maincond} and multiplying by $C_1^2\|c-Lf_\theta(X)\|_{\bL^2}^2$, we get
    \[
    C_1^2 \|c-Lf_\theta(X)\|_{\bL^2}^4 < (1-2\e C_1) \|c-Lf_\theta(X)\|_{\bL^2}^2,
    \]
    which can be rearranged to give
    \[
    (C_1 \|c-Lf_\theta(X)\|_{\bL^2}^2 + \e)^2 < \|c-Lf_\theta(X)\|_{\bL^2}^2 + \e^2.
    \]
    This shows that the right side of Equation \eqref{eq:diffbd} is strictly bigger than $\e$.\\
    
    Thus, to complete the proof, we must ensure that  1) $\norm{\theta - \theta_0} \leq R,$ 2) $\e \leq 1/4C_1,$ and 3) that condition \eqref{eq:maincond} holds. Using the bound 
    \[
    \|Lf_\theta(X) -c\|_{\bL^2} \leq \lambda_{\max}(I(\theta_0))\norm{\theta - \theta_0}_2,
    \]
    we see that the first and third conditions are satisfied if
    \[
    \norm{\theta - \theta_0}_2 \leq \min\biggl\{R, C_3\sqrt{\frac{1 - 2\e C_1}{C_1^2}}\biggr\},
    \]
    where $C_3 := 1/\lambda_{\max}(I(\theta_0))$. 
    By the strong identifiability condition, there exists a $\delta>0$ so small that whenever $\|f_\theta(X)- c\|_{\bL^2} \leq \delta$, we have 
    \[
    \norm{\theta - \theta_0}_2 \leq\min\biggl\{\e_0, R, \frac{C_3}{\sqrt{2}C_1}\biggr\}.
    \]
    Pick any $\e \leq \min\{\e_0,\delta/2,1/4C_1\}.$ Then $\|f_\theta(X)-c\|_{\bL^2}\leq 2\e$ implies that $ \norm{\theta - \theta_0}_2 \leq R$ and also 
    \[
     \norm{\theta - \theta_0}_2 \leq \frac{C_3}{\sqrt{2}C_1} = C_3\sqrt{\frac{1 - \frac{2}{4C_1} C_1}{C_1^2}} \leq C_3\sqrt{\frac{1 - 2\e C_1}{C_1^2}}.
    \]
    Thus, $\|g(X)-f_\theta(X)\|_{\bL^2} >\e$. This completes the proof.
\end{proof}

\begin{proof}[Proof of Cor. \ref{cor:example_linear_regression}]
In this setting, $\nabla_{\beta} f^{\textsf{lin}}_\beta(x) = (T_1(x),\dots,T_d(x))$. Let us take $\beta_0 = 0.$ The local identifiability condition is immediately satisfied since the covariance matrix $\Sigma$ is equal to $I(\theta)$, and is invertible. Similarly, the strong identifiability condition is satisfied since 
\[
\norm{f^{\textsf{lin}}_\beta(X) - f^{\textsf{lin}}_{\beta'}(X)}_{\bL^2}^2 = (\beta - \beta')^\top \Sigma (\beta - \beta') \geq s_d(\Sigma) \norm{\beta - \beta' }_2^2 ,
\]
where $s_d(\Sigma) > 0$ is the smallest singular value of $\Sigma$. Thus $\norm{f^{\textsf{lin}}_\beta(X) - f^{\textsf{lin}}_{\beta'}(X)}_{\bL^2} \rightarrow 0$ implies $\norm{\beta - \beta' }_2 \rightarrow 0.$ Finally the Hessian of the model is simply zero, so the integrability condition \eqref{eq:integrability_condition} is satisfied.
\end{proof}

\begin{proof}[Proof of Cor. \ref{cor:logistic_regression}]
By projection properties of the conditional expectation,
\[
\E\left[(Y - f_{\theta_0}^{\textsf{log}}(X))^2 \right] = \E\left[(Y - \E[Y|X])^2 \right] + \E\left[(\E[Y|X] - f_{\theta_0}^{\textsf{log}}(X))^2 \right].
\]
Thus we may consider fitting the distribution $(X,\E[Y|X])$. We check the conditions of Theorem \ref{thm:population_nfl}.

The gradient $\nabla_\theta f_{\theta_0}^{\textsf{log}}$ can be calculated; it is equal to $(1/4, x/4).$
In this case, the Fisher information can be calculated to be 
\[
I(\theta_0) = 
\frac{1}{16}\begin{bmatrix}
    1 & \E[X^\top]\\
    \E[X] & \E[XX^\top] \\
\end{bmatrix} = I_{p+1}.
\] The second derivatives of $f_{\theta}^{\sf{log}}$ in $\theta$ are also bounded everywhere, so the integrability of the second derivatives is satisfied. To show strong identifiability around $1/2$, we need to show that for every $\e$, there exists $\delta$ such that 
\begin{equation}
\label{eq:log_cor_proof}
\norm{\frac{1}{2} - \frac{1}{1 + \exp(\alpha^\top X + \beta)}}_{\bL^2} \leq \delta \Rightarrow \norm{(\alpha,\beta)}_2 \leq \e.
\end{equation}
Fix $\e$ and suppose now that the left hand side of Eq. \eqref{eq:log_cor_proof} holds for $\delta$ to be chosen later. By Markov's inequality after squaring, we have that 
\begin{align*}
& \Prob\biggl(\biggl|\frac{1}{2} - \frac{1}{1 + \exp(\alpha^\top X + \beta)}\biggr| \geq 2\delta\biggr) \leq \frac{1}{4},
\end{align*}
which can be rewritten as 
\begin{align}\label{alphaineq}
& \Prob\biggl(|\alpha^\top X + \beta| > \log \frac{1+4\delta}{1 - 4\delta}\biggr) \leq \frac{1}{4}
\end{align}
since $1/2 - (1+e^x)^{-1}$ is an odd function and invertible. As a consequence of the above inequality, we claim that 
\[
|\beta| \leq \log \frac{1+4\delta}{1 - 4\delta}.
\]
Let $C := \log \frac{1+4\delta}{1 - 4\delta}$.  By symmetry, $\Prob(\alpha^\top X \leq 0) = 1/2$. Then if $\beta > C$, $\alpha^\top X \geq 0 \Rightarrow \alpha^\top X + \beta \geq C$, which implies that $\frac{1}{2} \leq \Prob(|\alpha^\top X + \beta| \geq C)$. A similar statement can be made if $\beta < -C$. This yields a contradiction to equation \eqref{alphaineq}. Combining this bound on $\beta$ with equation \eqref{alphaineq}, we conclude that 
\[
\Prob\biggl(|\alpha^\top X| > 2 \log \frac{1+4\delta}{1 - 4\delta}\biggr) \leq \frac{1}{4}.
\]
From this we infer that $\alpha$ must also be small. In particular, 
\[
2\Phi\left( -\frac{2}{\norm{\alpha}_2} \log \frac{1+4\delta}{1 - 4\delta} \right) \leq \frac{1}{4},
\]
which implies that 
\[
\norm{\alpha}_2 \leq c\log \frac{1+4\delta}{1 - 4\delta}
\]
for some positive constant $c.$ Combined with the bound on $\beta$, we see that  $\norm{(\alpha,\beta)}_2 \leq \e$ for $\e$ going to zero as $\delta \rightarrow 0.$ 

Inspecting the proof of Theorem \ref{thm:population_nfl}, we can take $\E[Y|X] = f_{\theta_0}^{\textsf{log}}(X) + \e g_0(X)$, for any $g_0(X)$ orthogonal to $1$ and $\partial_{\theta_i}f_{\theta}^{\textsf{log}}(X),$ for each $i=1,\dots,p.$ As calculated above, these partial derivatives are given by $(1/4,X/4)$. Take $g_0$ a bounded even function such that $\E[g_0(X)] =0$. By definition $g_0(X)$ is orthogonal to $1$, and symmetry shows that $\E[X_ig_0(X)] = 0$ for each $i$. Thus the construction in the proof of Theorem \ref{thm:population_nfl} allows us to take $\E[Y|X] = \frac{1}{2} + \e g_0(X)$ for small enough $\e.$  
\end{proof}

\section{Acknowledgments}

We thank Matthew Farrugia-Roberts for helpful comments and references. SC was supported by NSF grants DMS-2413864 and DMS-2153654. TS acknowledges support from the NSF Graduate Research Fellowship Program under Grant DGE-1656518.

\bibliographystyle{alpha}
\bibliography{main}

\end{document}